\newcommand{\tvp}{\widetilde{\varphi}}
\newcommand{\tcF}{\widetilde{\cF}}
\newcommand{\tC}{\widetilde{C}}
\newcommand{\BG}{\B G} 
\newcommand{\B}{{\mathscr B}}
\newcommand{\C}{{\mathbb{C}}}
\newcommand{\E}{{\mathscr E}}
\newcommand{\LL}{{\mathscr L}} 
\newcommand{\MM}{\overline{\M}} 
\newcommand{\M}{{\mathscr M}}
\newcommand{\Q}{{\mathbb{Q}}}
\newcommand{\W}{{\mathscr W}}
\newcommand{\Z}{{\mathbb{Z}}}
\newcommand{\aut}{\operatorname{Aut}}
\newcommand{\be}{\mathbf{a}} 
\newcommand{\bgamma}{\boldsymbol{\gamma}} 
\newcommand{\bga}{\bgamma}
\newcommand{\bm}{\mathbf{m}}
\newcommand{\ba}{\mathbf{a}}
\newcommand{\bone}{\boldsymbol{1}}
\newcommand{\cCb}{\overline{\mathscr C}} 
\newcommand{\cC}{\mathscr C} 
\newcommand{\cc}{\mathcal{C}}  
\newcommand{\cE}{\E}
\newcommand{\cF}{\mathscr{F}}
\newcommand{\cL}{\LL}
\newcommand{\cM}{\mathscr{M}}
\newcommand{\cS}{\mathscr{S}}
\newcommand{\chat}{\hat{c}}
\newcommand{\ch}{{\mathscr A}} 
\newcommand{\cv}{\boldsymbol{c}^{1/r}}
\newcommand{\etar}{{\eta}^{JKV}} 
\newcommand{\fL}{\mathfrak{L}}
\newcommand{\fix}{\operatorname{Fix}}
\newcommand{\frkc}{\mathfrak{C}} 
\newcommand{\ga}{{\gamma}}
\newcommand{\Ga}{{\Gamma}}
\newcommand{\hGamma}{\widehat{\Gamma}}
\newcommand{\irightarrow}{\widetilde{\longrightarrow}}
\newcommand{\lgr}[1]{\langle\gamma_{#1}\rangle}
\renewcommand{\r}{{\boldsymbol{s}}}  
\newcommand{\chr}{\mathscr{S}}  
\newcommand{\st}{st} 
\newcommand{\tGamma}{\widetilde{\Gamma}}
\newcommand{\tcC}{\widetilde{\cC}}
\newcommand{\vp}{{\varphi}}
\renewcommand{\O}{{\mathscr{O}}}
\newcommand{\dsand}{\quad \text{ and } \quad} 
\newtheorem{thm}{Theorem}[subsection]
\newtheorem{prop}[thm]{Proposition}
\theoremstyle{definition}
\newtheorem{rem}[thm]{Remark}
\newtheorem{df}[thm]{Definition}
\newtheorem{ex}[thm]{Example}
\theoremstyle{remark}
\begin{document}

\date{\today}

\title[$A_{r-1}$ and $r$-Spin]{Quantum Singularity Theory for $A_{r-1}$ and  $r$-Spin Theory}
\author{Huijun Fan}
\thanks{Partially Supported by NSFC 10401001, NSFC 10321001, and NSFC 10631050}
\address{School of Mathematical Sciences, Peking University, Beijing 100871, China}
\email{fanhj@math.pku.edu.cn}
\author{Tyler Jarvis}
\thanks{Partially supported by National Science Foundation grant DMS-0605155 and NSA grant  \#H98230-10-1-0181}
\address{Department of Mathematics, Brigham Young University, Provo, UT 84602, USA}
\email{jarvis@math.byu.edu}
\author{Yongbin Ruan}
\thanks{Partially supported by the National Science Foundation and
the Yangtz Center of Mathematics at Sichuan University}
\address{Department of Mathematics, University of Michigan Ann Arbor, MI 48105 U.S.A
and the Yangtz Center of Mathematics at Sichuan University, Chengdu,
China} \email{ruan@umich.edu}

\maketitle

\begin{abstract}
We give a review of the theories of \cite{FJR} and the theory of \cite{JKV} and prove that for the singularity $A_{r-1} = x^{r}$ (with the group $G= \mu_r$), 
the stack of $A_{r-1}$-curves of \cite{FJR} is canonically isomorphic to the stack of $r$-spin curves described in \cite{AJ} and \cite{J}.  We further prove that the theory of \cite{FJR} satisfies all the axioms of \cite{JKV} for an $r$-spin virtual class.  Therefore, the results of \cite{JKV,Lee,FSZ,Giv} all apply to the $A_{r-1}$-theory of \cite{FJR}.  In particular, this shows that the Witten Integrable Hierarchies Conjecture is true for the $A_{r-1}$-theory of \cite{FJR}; that is, the total descendant potential function of the $A_{r-1}$-theory satisfies the $r$-th Gelfand-Dikii hierarchy.
\end{abstract}

\setcounter{tocdepth}{1}
\tableofcontents

\section{Introduction}
In the paper \cite{FJR} we introduce a family of moduli spaces, a virtual cycle,
and a corresponding cohomological field theory associated to each non-degenerate, quasi-homogeneous hypersurface singularity $W\in \C[x_1,\dots,x_N]$ and for each admissible subgroup $G$ of the diagonal automorphism group $G_{max} :=\{(\ga_1,\dots,\ga_N)\in (\C^*)^N | W(\ga_1 x_1,\dots,\ga_Nx_N) = W(x_1,\dots,x_N)\}$.

When the singularity is $A_{r-1} = x^{r}$, then the only admissible subgroup is the full automorphism group  $G_{max} = \mu_r \subset \C^*$, which is the group of $r$th roots of unity.

In this paper we prove that for the singularity $A_{r-1} = x^{r}$ (with the group $G= \mu_r$), 
the stack of $A_{r-1}$-curves of \cite{FJR} is canonically isomorphic to the stack of $r$-spin curves described in \cite{AJ} and \cite{J}, and that the theory of \cite{FJR} satisfies all the axioms of \cite{JKV} for an $r$-spin virtual class.  Therefore, the results of \cite{JKV,Lee,FSZ,Giv} all apply to the $A_{r-1}$-theory of \cite{FJR}.  In particular, this shows that the Witten Integrable Hierarchies Conjecture is true for the $A_{r-1}$-theory of \cite{FJR}; that is, the total descendant potential function of the $A_{r-1}$-theory satisfies the $r$-th Gelfand-Dikii hierarchy.

\subsection{Conventions and Notation}
Throughout this paper we will assume that $r>1$ is an integer and we will set $W:=x^r$.  The group of automorphisms of $W$ is $G = \mu_r = \{J^k| k\in \Z/r,\ J=\exp(2\pi i /r)\}$.  Although some of the results described in this paper hold in a more general setting, we will always work over $\C$.

\section{The stacks}

In this section we review the definition and basic properties of the stack of $r$-spin curves used in \cite{JKV,J} and of the stack of $A_{r-1}$-curves of \cite{FJR}.  These two stacks are isomorphic, as proved in \cite{AJ}.  We will briefly review that isomorphism in this section as well.

The stack of $A_{r-1}$ curves has a much simpler definition, and we will use that definition rather than the more complicated $r$-spin curve definition whenever possible.

\subsection{The stack of $A_{r-1}$-curves}

The definition in \cite{FJR} of an $n$-pointed $W$-curve $(\cC,\LL, p_1,\dots,p_n, \varphi)$ is an $n$-pointed stable orbicurve $(\cC,p_1,\dots, p_n)$, an orbifold line bundle $\LL$,  and an isomorphism $\varphi:\LL^r\rTo \omega_{\log}$, where $\omega_{\log}$ is the \emph{log-canonical bundle}---specifically, the bundle of meromorphic $1$-forms having at worst a single pole at each mark $p_i$ for $i\in \{1,\dots, n\}$.  That is, the sheaf of sections of $\omega_{\log}$ is locally generated by the one-form $dz/z$, where $z$ is a local coordinate on $\cC$ near a marked point $p_i$.  
We describe these structures in more detail below.


Recall that an orbicurve $\cC$\glossary{CC@$\cC$ & An orbicurve} with
marked points $p_1, \dots, p_n$ is a (possibly nodal) Riemann surface
$C$\glossary{CCC@$C$ & The coarse (underlying) curve of the orbicurve
  $\cC$} with orbifold structure at each $p_i$ and each node.  That is
to say, for each marked point $p_i$ there is a local group $G_{p_i}$
and (since we are working over $\C$) a canonical isomorphism $G_{p_i}
\cong \Z/m_i$ for some positive integer $m_i$.  A neighborhood of
$p_i$ is uniformized by the branched covering map $z \rTo z^{m_i}$.
For each node $p$ there is again a local group $G_p \cong \Z/n_j$
whose action is complementary on the two different branches.  That is
to say, a neighborhood of a nodal point (viewed as a neighborhood of
the origin of $\{z w=0\}\subset \C^2$) is uniformized by a branched
covering map $(z,w)\rTo (z^{n_j}, w^{n_j})$, with $n_j\geq 1$, and
with group action $e^{2 \pi i /n_j}(z,w)=(e^{2 \pi i /n_j}z, e^{-2\pi
  i/n_j}w)$.

\begin{df}
We call the orbicurve $\cC$ \emph{smooth} if the underlying curve
$C$ is smooth, and we will call the orbicurve \emph{nodal} if the
underlying curve $C$ is nodal.
\end{df}
We denote by $\varrho:\cC \rTo C$\glossary{rho@$\varrho$ & The
  projection to coarse (underlying) space, given by forgetting the
  orbifold structure} the natural projection to the underlying
(coarse, or non-orbifold) Riemann surface $C$. If $\LL$ is a line
bundle on $C$, it can be pulled back to an orbifold line bundle
$\varrho^*{\LL}$ over $\cC$. When there is no danger of confusion, we
use the same symbol $\LL$ to denote its pullback.

\begin{df}\label{df:K-log}
Let $\omega_C$
be the canonical bundle of $C$.  We define the \emph{log-canonical
  bundle of $C$} to be the line bundle
 $$\omega_{C,\log} := \omega_C \otimes \O(p_1) \otimes \dots \otimes
  \O(p_n),$$ where
  $\O(p)$ is the holomorphic line bundle of degree one whose sections
  may have a simple pole at $p$.  This bundle $\omega_{C,\log}$ can be
  thought of as the canonical bundle of the punctured Riemann surface
  $C-\{p_1, \dots, p_n\}$.

The \emph{log-canonical bundle of $\cC$} is defined to be the pullback
to $\cC$ of the log-canonical bundle of $C$:
\begin{equation}\label{eq:Klog}
\omega_{\cC,\log} := \varrho^* \omega_{C,\log}.
\end{equation}
\end{df}

Near a marked point $p$ of $C$ with local coordinate $x$, the bundle
$\omega_{C,\log}$ is locally generated by the meromorphic one-form $dx/x$.
If the local coordinate near $p$ on $\cC$ is $z$, with $z^m=x$, then
the lift $\omega_{\cC,\log} :=\varrho^*(\omega_{C,\log})$ is still locally
generated by $m\, dz/z = dx/x$.  When there is no risk of confusion,
we will denote both $\omega_{C,\log}$ and $\omega_{\cC,\log}$ by $\omega_{\log}$.  Near
a node with coordinates $z$ and $w$ satisfying $zw=0$, both $K$
and $\omega_{\log}$ are locally generated by the one-form $dz/z = -dw/w$.

\subsubsection{Pushforward to the underlying curve}
\label{rem:desingularize}
If $\LL$ is an orbifold line bundle on a smooth orbicurve $\cC$, then
the sheaf of locally invariant sections of $\LL$ is locally free of
rank one, and hence dual to a unique line bundle $|\LL|$ on $\cC$. We
also denote $|\LL|$ by $\varrho_* \LL$, and it is called the
``desingularization" of $\LL$\glossary{Lbar@$"|\LL"|$ & The
  desingularization of the line bundle $\LL$} in \cite[Prop
4.1.2]{CR1}. It can be constructed explicitly as follows.

We keep the local trivialization at non-orbifold points, and change it
at each orbifold point $p$.  If $\LL$ has a local chart $\Delta \times
\C$ with coordinates $(z, s)$, and if the generator $1\in \Z/m \cong G_p$ acts locally
on $\LL$ by $(z,s) \mapsto (\exp(2\pi i/m) z, \exp(2\pi i v /m) s)$,
then we use the $\Z/m$-equivariant map $\Psi: (\Delta-\{0\})\times \C
\rTo \Delta \times \C$ given by
\begin{equation}(z, s)\rTo (z^m, z^{-v}s),
\label{eq:desing-triv}
\end{equation}
where $\Z/m$ acts trivially on the second $\Delta\times \C$.
Since $\Z/m$ acts trivially, this gives a line bundle over $C$, which is $|\LL|$.

If the orbicurve $\cC$ is nodal, then the pushforward $\varrho_*\LL$
of a line bundle $\LL$ may not be a line bundle on $C$.  In fact, if
the local group $G_p$ at a node acts non-trivially on $\LL$, then the
invariant sections of $\LL$ form a rank-one torsion-free sheaf on $C$
(see \cite{AJ}).  However, we may take the normalizations $\tcC$ and
$\widetilde{C}$ to get (possibly disconnected) smooth curves, and the
pushforward of $\LL$ from $\tcC$ will give a line bundle on
$\widetilde{C}$.  Thus $|\LL|$ is a line bundle away from the nodes of
$C$, but its fiber at a node can be two-dimensional; that is, there is
(usually) no gluing condition on $|\LL|$ at the nodal points.  The
situation is slightly more subtle than this (see \cite{AJ}), but for
our purposes, it will be enough to consider the pushforward $|\LL|$ as
a line bundle on the normalization $\widetilde{C}$ where the local
group acts trivially on $\LL$.

It is also important to understand more about the sections of the
pushforward $\varrho_*\LL$. Suppose that $s$ is a section of $|\LL|$
having local representative $g(u)$.  Then $(z, z^v g(z^m))$ is a
local section of $\LL$. Therefore, we obtain a section
$\varrho^*(s)\in \Omega^0(\LL)$ which equals $s$ away from orbifold
points under the identification given by
Equation~\ref{eq:desing-triv}. It is clear that if $s$ is holomorphic,
so is $\varrho^*(s)$. If we start from an analytic section of $\LL$,
we can reverse the above process to obtain a section of $|\LL|$. In
particular, $\LL$ and $|\LL|$ have isomorphic spaces of holomorphic
sections: $$\varrho^*: H^0(C,|\LL|) \irightarrow H^0(\cC,\LL). $$ In
the same way, there is a map $\varrho^*: \Omega^{0,1}(|\LL|)\rTo
\Omega^{0,1}(\LL)$, where $\Omega^{0,1}(\LL)$ is the space of orbifold
$(0,1)$-forms with values in $\LL$. Suppose that $g(u)d\bar{u}$ is a
local representative of a section of $t\in \Omega^{0,1}(|\LL|)$. Then
$\varrho^*(t)$ has a local representative $z^v g(z^m) m \bar{z}^{m-1}
d\bar{z}$.  Moreover, $\varrho$ induces an isomorphism
$$\varrho^* :H^1(C,|\LL|)\irightarrow H^1(\cC,\LL).$$

\begin{ex}
The pushforward $|\omega_{\cC,log}|$ of the log-canonical bundle of any
orbicurve $\cC$ is again the log-canonical bundle of $C$, because at a
point $p$ with local group $G_p \cong \Z/m$ the one-form $m\, dz/z =
dx/x$ is invariant under the local group action.

Similarly, the pushforward $|\omega_{\cC}|$ of the canonical bundle of
$\cC$ is just the canonical bundle of $C$:
\begin{equation}\label{eq:KbarIsK}
|\omega_{\cC}| = \varrho_*\omega_{\cC} = \omega_C,
\end{equation}
 because the local group $\Z/m$ acts on the one-form $dz$ by $\exp(2
 \pi i /m) dz$, and the invariant holomorphic one-forms are precisely
 those generated by $mz^{m-1}dz = dx$.
\end{ex}


\begin{df}
An $A_{r-1}$-structure on an orbicurve $\cC$ is a choice of a line bundle $\LL$ and an isomorphism of line bundles
$$\varphi:\LL \rTo \omega_{\log},$$
with the additional condition that for each point $p \in \cC$, the 
 induced representation $\rho_p:G_p \rTo \aut(\LL)\cong U(1)$ be faithful.
\end{df}

An \emph{isomorphism of $A_{r-1}$-structures} $\Upsilon:(\LL,\varphi)\rTo (\LL',\varphi')$ on $\cC$ is defined to be an isomorphism $\xi:\LL \rTo \LL'$ such that $\varphi = \varphi' \circ  \xi$.

Different choices of maps $\varphi$ give isomorphic $A_{r-1}$-structures.
\begin{prop}
For a given orbicurve $\cC$,  any two $A_{r-1}$-structures $\fL_1:=(\LL,\vp)$ and $\fL_2:=(\LL,\vp')$ on $\cC$ which have identical structure bundle $\LL$ 
are isomorphic.
\end{prop}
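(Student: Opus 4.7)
The plan is to build the required isomorphism $\xi:\LL\rTo\LL$ as multiplication by a suitably chosen constant in $\C^{*}$. The crucial input is that, since $\cC$ is a connected proper orbicurve over $\C$ whose sheaf of functions has the same global sections as the coarse curve $C$ (as recalled in the discussion of $\varrho_{*}$), every self-automorphism of a line bundle on $\cC$ is given by multiplication by a nonzero scalar, i.e.\ by an element of $H^{0}(\cC,\O_{\cC}^{*})=\C^{*}$.

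First, I would form the composite $\Phi:=\vp^{-1}\circ\vp':\LL^{r}\rTo\LL^{r}$, which is an automorphism of the line bundle $\LL^{r}$ and hence, by the remark above, equals multiplication by some $\mu\in\C^{*}$. Because $\C$ is algebraically closed, I may choose $c\in\C^{*}$ with $c^{r}=\mu$ and define $\xi:\LL\rTo\LL$ to be multiplication by $c$. This is manifestly an isomorphism of orbifold line bundles: a central scalar commutes with every local representation $\rho_{p}:G_{p}\rTo\aut(\LL)$, so the faithfulness condition in the definition of an $A_{r-1}$-structure is automatically preserved. The induced map $\xi^{r}:\LL^{r}\rTo\LL^{r}$ is multiplication by $c^{r}=\mu=\Phi$, so
\[
\vp'\circ\xi^{r}\;=\;\vp'\circ\Phi\;=\;\vp'\circ\vp'^{-1}\circ\vp\;=\;\vp,
\]
which is exactly the condition that $\xi$ define an isomorphism of $A_{r-1}$-structures from $\fL_{2}$ to $\fL_{1}$.

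There is really no serious obstacle here. The one point requiring care is the passage from ``automorphism of a line bundle'' to ``multiplication by a constant'', which relies on $\cC$ being proper and connected; if $\cC$ is disconnected one runs the argument on each component and assembles the $\xi$'s. The only other input is the existence of $r$-th roots in $\C$, which is automatic. Note that $c$ is only determined up to an $r$-th root of unity, reflecting the expected $\mu_{r}$-ambiguity in the choice of trivialization of the structure bundle.
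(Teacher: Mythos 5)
Your proof is correct and follows essentially the same route as the paper: identify the composite $\vp^{-1}\circ\vp'$ with a scalar $\mu\in\C^*$ (the paper writes it as $\exp(\alpha)$) and rescale $\LL$ by an $r$-th root of that scalar. The only slip is in your final display, where you silently replace $\Phi=\vp^{-1}\circ\vp'$ by its inverse $\vp'^{-1}\circ\vp$; to get $\vp'\circ\xi^{r}=\vp$ you should take $c^{r}=\mu^{-1}$ rather than $c^{r}=\mu$ (equivalently, your $\xi$ as written gives the isomorphism in the opposite direction), a harmless fix that changes nothing of substance.
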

\begin{proof}
The composition $\vp^{-1}\circ \vp'$ is an automorphism of $\omega_{\log}$ and hence defined by an element $\exp(\alpha) \in \C^*$.  Let $\beta = \alpha/r$.  This induces an automorphism $\exp(\beta):\LL\rTo\LL$ which takes $\vp$ to $\exp(\alpha)\vp = \vp'$, and thus induces an isomorphism of $A_{r-1}$-structures $\fL_1 \irightarrow \fL_2$.
\end{proof}

\begin{df} For each orbifold marked point $p_i$ we will denote the  image $\rho_{p_i}(1)$ of the canonical generator $1\in \Z/m \cong G_{p_i}$ in $U(1)$ by
$$\ga_i:=\ga_{p_i} :=\rho_{p_i}(1) = \exp(2\pi i \Theta^\ga).$$
\end{df}

The choices of orbifold structure for the line bundles in the $A_{r-1}$-structure is severely restricted by $W$; specifically, the faithful representation $\rho_{p_i}: G_{p_i} \rTo U(1)$ factors through $G$ so $\ga_i \in \mu_r$.
\begin{df}
A marked point $p$ of an $A_{r-1}$-curve is called \emph{narrow} (called Neveu-Schwarz in \cite{FJR})
 if $\ga_p = 1$.  The point $p$  is called \emph{broad} (called Ramond in \cite{FJR}) otherwise.
\end{df}

\subsection{Stack of stable $A_{r-1}$-orbicurves}

\begin{df}\label{df:stable-W-curve}
A pair $\frkc=(\cC,\fL)$\glossary{Cfrak@$\frkc$ & A $A_{r-1}$-curve $(\cC,\fL)$} consisting of  an orbicurve $\cC$ with $n$ marked points and with
$A_{r-1}$-structure $\fL$
is called a \emph{stable
$A_{r-1}$-orbicurve}
if the underlying curve $C$ is a stable curve.
\end{df}
\begin{df}
A \emph{genus-$g$, stable $A_{r-1}$-orbicurve with $n$ marked points over a
base $T$} is given by a flat family of genus-$g$, $n$-pointed
orbicurves $\cC \rTo T$ with (gerbe) markings $\cS_i \subset \cC$
and sections $\sigma_i: T \rTo \cS_i$, and the data $(\LL,\vp)$.  The sections $\sigma_i$ are required to
induce isomorphisms between
$T$ and the coarse moduli of $\cS_i$ for $i\in\{1,\dots,n\}$.  The bundle $\LL$ is an  orbifold line bundle on $\cC$.  And $\vp : \LL^r \irightarrow
\omega_{\cC/T, log}$ is an isomorphism to the
 relative log-canonical bundle
which, together with the $\LL$, induces an
$A_{r-1}$-structure on every fiber $\cC_t$.
\end{df}


\begin{df}
We denote the stack of stable $A_{r-1}$-orbicurves by
$\W_{g,n}(A_{r-1})$ or simply $\W_{g,n}$.  
\end{df}

Forgetting the $A_{r-1}$-structure and the orbifold structure gives a morphism $$\st:\W_{g,n
} \rTo \MM_{g,n}.
$$  The morphism $\st$ plays a role similar to that played by the {stabilization} morphism of stable maps.  

\begin{thm}[\cite{AJ}]
The stack $\W_{g,n}$ is a smooth, compact orbifold (Deligne-Mumford stack) with projective coarse moduli.  In particular, the morphism $st:\W_{g,n} \rTo \MM_{g,n}$ is flat, proper and quasi-finite (but not representable).  \end{thm}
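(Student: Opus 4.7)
The plan is to reduce everything to the corresponding statements for the stack of $r$-spin curves $\mgnrbar$, for which these properties were established in \cite{AJ} (and earlier, in weaker categorical form, in \cite{J}). Since we are told that $\W_{g,n}(A_{r-1})$ and $\mgnrbar$ are canonically isomorphic (and this isomorphism is reviewed elsewhere in the paper), it suffices to verify the properties for $\W_{g,n}$ using its own simpler definition. I would split the proof into (a) algebraicity and smoothness, (b) properness, (c) quasi-finiteness and non-representability of $\st$, and (d) projectivity of the coarse moduli.

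For smoothness, I would work infinitesimally. Given a stable $A_{r-1}$-orbicurve $\frkc=(\cC,\LL,\vp)$ over an Artinian base and a square-zero extension, the obstruction to deforming $\frkc$ lives in a group computed from $\cC$, $\LL$ and $\omega_{\log}$. The isomorphism $\vp:\LL^r\cong\omega_{\log}$ rigidifies the line-bundle part: once one fixes a deformation of the orbicurve $\cC$ over the extension, the deformations of $\LL$ compatible with $\vp$ form a torsor under a group of order prime to the obstruction, and in fact the forgetful map to deformations of $\cC$ is étale (the fibers are torsors under $\mu_r^{2g}$ on the smooth locus, and the boundary requires a careful local computation at each node). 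Since $\MM_{g,n}$ is smooth and the map $\st$ is étale-locally a disjoint union of nontrivial gerbes, $\W_{g,n}$ is smooth and Deligne–Mumford.

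For properness, I would verify the valuative criterion directly on $\W_{g,n}$. Starting from a family of $A_{r-1}$-orbicurves over a punctured disc $\Spec K$, first extend the underlying stable curve to $\Spec R$ using properness of $\MM_{g,n}$; then extend $\LL$ by the usual argument that torsion-free rank-one sheaves specialize, followed by performing an appropriate orbifold base-change at each node of the central fiber so that $\vp$ extends to an isomorphism of the $r$-th power with $\omega_{\log}$. Uniqueness (up to the allowed finite ambiguity) follows because any two extensions differ by an $r$-torsion line bundle whose $r$-th power is canonically trivialized. Quasi-finiteness of $\st$ is immediate: on the smooth locus the fiber is a $\mu_r^{2g}$-torsor, while over a nodal curve the fiber is parametrized by the finite data of a local group at each node plus the finite set of $r$-th roots on the normalization compatible with the allowed monodromies. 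Non-representability follows because every object carries the nontrivial automorphism group $\mu_r$ acting by scalars on $\LL$ and fixing $\vp^r$ up to the trivial scalar.

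The hard part is the boundary: both smoothness and properness require a delicate analysis at the nodes, where the orbifold structure, the torsion-free-sheaf behavior of $\varrho_*\LL$, and the compatibility of $\vp$ with the local gluing data all interact. Concretely, for each node one must choose the local index $n_j$ and decompose the representation of $G_p$ on the two branches so that the $r$-th power still matches $\omega_{\log}$; this is exactly where the isomorphism with $\mgnrbar$ from \cite{AJ} is most useful, since the analogous analysis for $r$-spin structures is carried out there. Finally, for the projectivity of the coarse moduli I would combine properness with an ample line bundle obtained by pulling back an ample bundle from $\MM_{g,n}$ and twisting by a positive power of the Hodge-type determinant bundle $\det R\pi_*\LL$; this yields a relatively ample line bundle on the coarse moduli of $\W_{g,n}$ over $\MM_{g,n}$, from which projectivity follows.
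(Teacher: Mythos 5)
The paper itself offers no proof of this statement: it is quoted verbatim from \cite{AJ}, so there is no internal argument to compare against, and your sketch has to be judged against the strategy of \cite{AJ}. It follows that strategy in outline, but there is one genuine gap, in the smoothness step. You deduce smoothness of $\W_{g,n}$ from the claim that $\st$ is ``\'etale-locally a disjoint union of nontrivial gerbes'' over the smooth $\MM_{g,n}$. That claim is false along the boundary: at a node where the local group $G_p\cong\Z/\ell$ acts nontrivially on $\LL$ (which happens generically on most boundary components, by the faithfulness requirement in the definition of an $A_{r-1}$-structure), the induced map on deformation spaces sends the node-smoothing parameter $t$ of the orbicurve to $t^{\ell}$ for the coarse curve, so $\st$ is ramified there and is in no sense a gerbe over an \'etale cover. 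The correct route, and the one taken in \cite{AJ} following Abramovich--Vistoli, is to factor $\st$ through the stack of twisted (orbifold) curves: extracting an $r$-th root of the fixed line bundle $\omega_{\log}$ on a \emph{fixed} twisted curve is an \'etale operation in characteristic zero, and the stack of twisted curves is itself smooth, so smoothness of $\W_{g,n}$ follows from that factorization rather than from any \'etaleness of $\st$. Your phrase ``torsor under a group of order prime to the obstruction'' does not parse and is presumably an attempt to express exactly this \'etaleness of the root-taking step; as written it proves nothing.

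A second omission: the theorem asserts that $\st$ is \emph{flat}, and your proposal never addresses flatness. It cannot be read off from your local description (which, as above, fails at the boundary); the standard way to get it is miracle flatness, i.e.\ once $\W_{g,n}$ and $\MM_{g,n}$ are known to be smooth and $\st$ is quasi-finite (hence has zero-dimensional fibers), flatness is automatic. The remaining pieces of your sketch are essentially the \cite{AJ} argument and are fine in outline: properness via the valuative criterion after a ramified base change (or by transporting properness from the torsion-free-sheaf compactification of \cite{J}), quasi-finiteness from the finiteness of the possible local indices at the nodes together with the finiteness of $\Pic(C)[r]$, and non-representability from the ubiquitous $\mu_r$ of scalar automorphisms of $\LL$. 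For projectivity of the coarse moduli you can argue more simply than you do: a proper quasi-finite morphism of coarse spaces is finite, and a scheme finite over the projective coarse space of $\MM_{g,n}$ is projective, so the determinant line bundle is not needed.
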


\subsubsection{Decomposition of $\W_{g,n}$ into components}
The orbifold structure, and the image $\ga_i = \rho_{p_i}(1)$ of the canonical generator $1\in\Z/m_i\cong G_{p_i}$ at each marked point $p_i$ is locally constant, and hence is constant for each component of $\W_{g,n}$. Therefore, we can use these decorations to
decompose the moduli space into components.

\begin{df}\label{df:type}
For any choice  $\bgamma:=(\gamma_1, \dots, \gamma_n) \in G^n$ we define $\W_{g,n}(\bgamma)\subseteq \W_{g,n}$ to be the open and closed substack with orbifold
decoration $\bgamma$.\glossary{MMgkwgamma@$\W_{g,n}(\bgamma)$ & The stack of stable $A_{r-1}$-curves of genus $g$, with $n$ marked points and type $\bgamma$.}
We call $\bgamma$ the \emph{type} of any $A_{r-1}$-orbicurve in
$\W_{g,n}(\bgamma)$.
\end{df}
We have the decomposition
$$\W_{g,n}=\sum_{\bgamma} \W_{g,n}(\bgamma).$$

The following proposition is proved in \cite{FJR}.
 \begin{prop}
A necessary and sufficient condition for $\W_{g,n}(\bgamma)$ to be non-empty is
\begin{equation}\label{eq:deg-sel-rule}
(2g - 2 + n)/r-\sum^n_{l=1}\Theta^{\gamma_l} \in \Z.
\end{equation}
\end{prop}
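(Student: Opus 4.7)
The plan is to extract the condition from a degree computation on the coarse curve $C$, using the pushforward/desingularization recipe already recalled in Section~\ref{rem:desingularize}.

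\textbf{Necessity.} Suppose $(\cC,\LL,\vp)\in\W_{g,n}(\bgamma)$ exists, and begin with the case where $\cC$ is smooth. The isomorphism $\vp:\LL^r\irightarrow \omega_{\log}$ forces $r\deg_{\cC}\LL = \deg_{\cC}\omega_{\log} = 2g-2+n$, where the degree is the orbifold (rational) degree. The relation between $\LL$ and its desingularization $|\LL|$ under the local trivialization $(z,s)\mapsto(z^m,z^{-v}s)$ of \eqref{eq:desing-triv} shifts the degree at each marked point $p_i$ by $-v_i/m_i = -\Theta^{\gamma_i}$; hence
$$
\deg_C |\LL| \;=\; \deg_{\cC}\LL - \sum_{i=1}^{n}\Theta^{\gamma_i} \;=\; \frac{2g-2+n}{r}-\sum_{i=1}^{n}\Theta^{\gamma_i}.
$$
Since $|\LL|$ is an honest line bundle on the smooth curve $C$, the left side is an integer, giving \eqref{eq:deg-sel-rule}. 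For nodal $\cC$, the same computation applies on each irreducible component of the normalization $\widetilde C$, and summing the per-component conditions (all monodromies at nodes are inverse in pairs, so they cancel) reproduces \eqref{eq:deg-sel-rule}.

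\textbf{Sufficiency.} Conversely, assume \eqref{eq:deg-sel-rule} holds. First work on a smooth orbicurve $\cC$ with the prescribed isotropy $\Z/m_i$ and representation $\gamma_i$ at each $p_i$ (such an $\cC$ exists for any stable $C\in\MM_{g,n}$ by taking the standard $m_i$-fold orbifold cover at each marked point). Choose any line bundle $N$ on $C$ of integer degree $d:=(2g-2+n)/r-\sum_i\Theta^{\gamma_i}$. Reverse the construction of \eqref{eq:desing-triv} to orbify $N$ with prescribed local weights $v_i$ at each $p_i$; this yields an orbifold line bundle $\LL_0$ on $\cC$ of degree $(2g-2+n)/r$ whose $r$th power $\LL_0^r$ has trivial isotropy at every $p_i$ (because $\gamma_i^{r}=1$) and matches the degree of $\omega_{\log}$. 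Hence $M:=\omega_{\log}\otimes\LL_0^{-r}$ descends to a degree-zero line bundle on $C$. The $r$-th power map $\Pic^0(C)\to\Pic^0(C)$ is surjective (it is a surjective isogeny of the Jacobian), so $M$ admits an $r$-th root $R$; setting $\LL:=\LL_0\otimes R$ and choosing any isomorphism $\vp:\LL^r\irightarrow\omega_{\log}$ produces the desired $A_{r-1}$-structure of type $\bgamma$.

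For nodal $\cC$, carry out the above construction on each irreducible component of the normalization $\widetilde C$---the hypothesis \eqref{eq:deg-sel-rule}, together with the balancing of monodromies at the nodes, guarantees that the per-component degree condition is met---and then glue the resulting bundles across the nodes in a manner compatible with the nontrivial local group action on $\LL$ there; such a gluing exists because the two-dimensional fiber of $|\LL|$ at a broad node provides exactly the freedom needed to match the chosen trivializations, while at narrow nodes the usual line-bundle gluing on $C$ suffices.

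\textbf{Main obstacle.} The only subtle step is the sufficiency argument at nodal curves: one must check that the degree condition on each component of $\widetilde C$ follows from the single global condition \eqref{eq:deg-sel-rule} together with the requirement that the two local representations at each node be inverse to each other, and that the $r$-th root on $\widetilde C$ can always be descended/glued back to a genuine orbifold line bundle on $\cC$. Everything else reduces to elementary Picard-group calculations and the explicit desingularization formula \eqref{eq:desing-triv} reviewed above.
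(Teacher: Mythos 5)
Your argument is correct and is essentially the proof the paper is relying on: the paper itself gives no proof but defers to [FJR], where the statement is established by exactly this computation --- $\deg_{\cC}\LL=(2g-2+n)/r$, the desingularization shifts the degree by $-\sum_i\Theta^{\gamma_i}$, integrality of $\deg_C|\LL|$ gives necessity, and surjectivity of the $r$-th power map on $\Pic^0(C)$ gives sufficiency. The only remark worth making is that your nodal-case discussion (especially the gluing argument for sufficiency) is unnecessary: for non-emptiness it is enough to exhibit one smooth curve in $\W_{g,n}(\bgamma)$, and for necessity the per-component bookkeeping should say that $\Theta^{\gamma_+}+\Theta^{\gamma_-}$ at a node is an integer ($0$ or $1$) rather than that the contributions ``cancel.''
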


\begin{ex}

For three-pointed, genus-zero $A_{r-1}$-curves, the choice of orbifold
line bundle $\LL$ providing the $A_{r-1}$-structure is
unique, if it exists at all. Hence, if the selection rule is satisfied, $\W_{0,3}(\bgamma)$ is isomorphic to the classifying stack
$\B\mu_r := [pt/\mu_r]$.
\end{ex}

\subsection{The stack of $r$-spin curves}

\subsubsection{Smooth $r$-spin curves}\label{smooth}
Let $g$ and $n$ be  integers such
that $2g-2+n>0$. Let $\bm=(m_1,\ldots,m_n)$ be an $n$-tuple of
integers.  
A nonsingular $n$-pointed $r$-spin curve of genus $g$ and type
$\bm$ over a base $S$, denoted $(C\rTo S, s_i, \cL,
c)$, is the data of
\begin{enumerate}
\item a smooth,
$n$-pointed curve $(C\rTo S, s_i: S \rTo C)$ of genus $g$,
\item an invertible sheaf $\cL$ on $C$, and
\item an isomorphism $c:\cL^{\otimes r} \irightarrow
\omega_{C/S}(-\sum_{i=1}^n m_i S_i)$, where $S_i$ is the image of
$s_i$.
\end{enumerate}

%
%
%

The category $\cM_{g,n}^{1/r,\bm}$ of nonsingular,
$n$-pointed $r$-spin curves of genus $g$ and type
$\ba$, with morphisms given by fiber diagrams, is a
Deligne-Mumford stack with quasi-projective coarse moduli space. 
See \cite{AJ,J} for a
detailed proof. When $\bm$ is congruent to $\bm' \bmod r$, the two
stacks $\cM_{g,n}^{1/r,\bm}$ and $\cM_{g,n}^{1/r,\bm'}$ are
canonically isomorphic.  We denote by $\cM^{1/r}_{g,n}$ the
disjoint union $\displaystyle \coprod_{\substack{\bm\\ 0 \leq k_i <r}}
\cM^{1/r,\bm}_{g,n}.$

Note that if $(C\rTo S, s_i, \cL, c)$ is a nonsingular, $n$-pointed
$r$-spin curve of genus $g$ and type $\bm$,  then 
\begin{equation}\label{rspinselrule}
\deg \cL = (2g-2-\sum
m_i/r) \in \Z.
\end{equation}
Moreover, the stack $\M^{1/r,\bm}_{g,n}$ is non-empty if and only if $$\sum
m_i \equiv 2g-2 \bmod r.$$

\subsubsection{The stack of stable $r$-spin curves}\label{stable}
To compactify the stack $\M_{g,n}^{1/r, \bm}$ one must replace $r$th-root line bundles by rank-one torsion-free sheaves on nodal curves as described in \cite{AJ,J}.  Although the full definition of the stable $r$-spin curve includes additional data, for our purposes the most important structure is the choice of an $n$-pointed stable curve $(C, p_1,\dots,p_n)$, a rank-one, torsion-free sheaf $\cE$ on $C$, and a morphism $\varphi:\cE^{\otimes r} \rTo \omega(-\sum_{i=1}^n m_ip_i)$ such that away from the nodes of $C$ the morphism $\varphi$ is an isomorphism.  Additionally, we require that at each node $q$ of $C$ one of the following two conditions holds.  
\begin{enumerate}
\item The morphism $\varphi$ is an isomorphism (in which case $\cF$ is locally free at the node).  In this case we call the spin structure \emph{Ramond} at the node $q$. 
\item The sheaf $\cF$ is not locally free, but $\cF$ and the morphism $\varphi$ are induced from a line bundle $\tcF$ and an isomorphism $\tvp:\tcF^{\otimes r} \rTo \omega_{\tC}(-m_+ q_+ - m_-q_-) $ on the normalization $\nu:\tC \rTo C$ of $C$ at the node $q$.  Here the normalization $\tC$ has two smooth points $q_+$ and $q_-$ lying over the node $q$, and the integers $m_+$ and $m_-$ must  satisfy $m_+ + m_- \equiv r-2 \pmod r$.  We call these integers the \emph{type} of the $r$-spin structure at the node $q$.    The inclusion $\omega_{\tC} \rInto \omega_{\tC,log} = \nu^*\omega_C$ gives a morphism $\tcF^{\otimes{r}} \rTo \nu^*\omega_C$ which induces (by adjointness) the morphism $\varphi:\cF^{\otimes r} \rTo \omega_C$. In this case we say that the spin structure is \emph{Neveu-Schwarz} at the node $q$.
\end{enumerate}
Additional data and restrictions must also be placed on the torsion-free sheaves to ensure that the compactification is smooth.  These restrict the way the spin structures may vary in
families and it involves additional data in the form of
intermediate roots of $\omega$.  For more details on these structures, see \cite{AJ,J}.

The category $\overline\cM_{g,n}^{1/r,\ba}$ of
$n$-pointed, stable $r$-spin curves of genus $g$ and type
$\ba$, with morphisms given by fiber diagrams, is a
smooth, proper Deligne-Mumford stack with projective coarse moduli space.  The forgetful map 
$\MM_{g,n}^{1/r} \rTo \MM_{g,n}$ is quasi-finite and proper, but not representable \cite{J}.


We must also consider decorated dual graphs.
\begin{df}
A \emph{stable decorated graph} is a stable graph  with a marking  of each half-edge by
an integer $m$ with  $-1 \le m<r$, such that for each edge $e$ the
marks $m^+$ and $m^-$ of the two half-edges of $e$ satisfy $$ m^+
+ m^- \equiv r-2 \kern-.75em\pmod r. $$

Given a stable $r$-spin curve $\mathfrak{C}$ of type $ \bm=(m_1,
\dots, m_n)$, the \emph{decorated dual graph} of $\mathfrak{C}$ is
the dual graph $\Gamma$ of the underlying curve $C$, with the
following additional markings.  The $i$-th tail is marked by
$m_i$, and each half-edge associated to a node of $C$ is marked by
the type ($m^+$ or $m^-$) of the $r$-spin structure along the
branch of the node associated to that half-edge if the node is Neveu-Schwarz. We mark the half edges with $r-1$ if the corresponding node is Ramond.

Given a decorated stable graph $\Ga$, we denote by $\MM_{\Ga}^{1/r}$ the locus of $r$-spin curves in $\MM_{g,n}^{1/r}$ with dual graph $\Ga$.
\end{df}

\subsubsection{Change of type and roots of other bundles}

In the definitions given above, one could also have replaced the bundle $\omega$ with another line bundle defined on every stable curve.  Specifically, let $\bone:=(1,1,\dots,1)\in \Z^n$ be the $n$-tuple of all ones.  It will be important in this paper to compare $r$-spin curves to the stack obtained by replacing $\omega$ by  $\omega_{\log}:=\omega(\sum S_i)$, or by $\omega(-r\ell\bone):= \omega(-\sum r \ell S_i)$, or by $\omega_{\log}(-r\ell\bone):=\omega(\sum (1-r \ell) S_i)$, for any choice of $\ell\in \Z$.  We denote the corresponding stacks by $\MM_{g,n,log}^{1/r,\bm}$, $\MM_{g,n}^{1/r,\bm+r\ell\bone}$, and $\MM_{g,n,log}^{1/r,\bm+r\ell\bone}$, respectively.  More precisely, points of the smooth locus $\cM_{g,n,log}^{1/r,\bm}$ are tuples $(C,\LL,c)$ such that $C$ is an $n$-pointed,smooth, genus-$g$ curve, $\LL$ is a line bundle on $C$, and $c:\LL^{\otimes r} \irightarrow \omega_{C,log}(-\sum m_i p_i) =  \omega_{C}(\sum (1-m_i) p_i)$ is an isomorphism of line bundles on $C$.
Points of  the smooth locus $\cM_{g,n}^{1/r,\bm+r\ell\bone}$ are tuples $(C,\LL,c)$ such that $C$ is an $n$-pointed,smooth, genus-$g$ curve, $\LL$ is a line bundle on $C$, and $c:\LL^{\otimes r} \irightarrow \omega_{C}(-\sum (m_i+r\ell) p_i)$ is an isomorphism of line bundles on $C$.  And points of the smooth locus  $\cM_{g,n,log}^{1/r,\bm+r\ell\bone}$ are tuples $(C,\LL,c)$ such that $C$ is an $n$-pointed,smooth, genus-$g$ curve, $\LL$ is a line bundle on $C$, and $c:\LL^{\otimes r} \irightarrow \omega_{C,log}(-\sum (m_i+r \ell) p_i) =  \omega_{C}(\sum (1-m_i-r\ell) p_i)$ is an isomorphism of line bundles on $C$.

The stack $\MM_{g,n}^{1/r,\bm}$ is canonically isomorphic to the stack $\MM_{g,n}^{1/r,\bm+r\ell\bone}$ for any $m\in \Z$.  Similarly, the stacks $\MM_{g,n,log}^{1/r,\bm}$ and $\MM_{g,n,log}^{1/r,\bm+r\ell\bone}$ are canonically isomorphic for any $\ell\in \Z$.  In both cases, when $C$ is smooth, the isomorphism sends $(C,\LL,\varphi)$ to $(C,\LL\otimes\omega^m,\vp\otimes I)$, where $I:(\omega^{\otimes \ell})^{\otimes r} \irightarrow \omega^{\otimes \ell r}$ is the obvious isomorphism.

Similarly, the stack $\MM_{g,n,log}^{1/r,\bm}$ is isomorphic to the stack  $\MM_{g,n}^{1/r,\bm-\bone}$.  On the smooth locus this is immediate from the definitions.

\subsection{The isomorphism between $\W_{g,n}$ and $\MM^{1/r}_{g,n}$}

In \cite[\S4]{AJ} it is shown that the stack $\W_{g,n}(A_{r-1})$ of stable $A_{r-1}$-curves (in \cite{AJ} this stack is denoted $\mathcal{B}_{g,n}(\mathbb{G}_m,\omega_{\log}^{1/r})$) is isomorphic to the stack $\MM_{g,n}^{1/r}$ of $r$-spin curves.  On the smooth locus the isomorphism is given simply by pushing forward the orbifold line bundle to the  underlying (coarse) curve.  Before describing the isomorphism, we will review some facts about pushforwards of $A_{r-1}$ structures to the underlying curve.

\subsubsection{Pushforward of $A_{r-1}$-structures}

We now briefly recall some facts about the behavior of $A_{r-1}$-structures when forgetting the orbifold structure at marked points, that is, when they are pushed down to the underlying (coarse) curve.

An $A_{r-1}$-structure consists of a line bundle $\LL$ with an isomorphism $\LL^{\otimes r}\cong \omega_{\log}$ such that near an orbifold point $p$ with local coordinate $z$ the canonical generator $1\in\Z/m \cong G_p$ of the local group $G_p$ acts on $\LL$ by $(z,s) \mapsto (\exp(2 \pi i/m) z, \exp(2\pi i (v/m)) s)$ for some $v \in \{0, \dots, m-1\}$.   Since $\omega_{\log}$ is invariant under the local action of $G_p$, we must have $rv=\ell m$ for some  $\ell\in \{0,\dots, r-1\}$, and $\frac{v}{m}=\frac{\ell}{r}$.  Denote the (invariant) local coordinate on the underlying curve $C$ by $u=z^m$.  Any section in $\sigma\in\Omega^0(|\LL|)$ must locally be of the form $\sigma=g(u)z^{v}s$, in order to be $\Z/m$-invariant.  So
$\sigma^r$ has local representative $z^{r v
} g^r(u)\frac{dz}{z}=u^{\ell}g^r(u) \frac{du}{mu}$. Hence, $\sigma^r\in
\Omega^0(\omega_{\log} \otimes \O ((-\ell) p)$, and thus when $\ell \neq 0$, we have $\sigma^r\in \Omega^0(K)$.


From this we get the following proposition.
\begin{prop}\label{prop:pushforward-W}
%
%

If $\cC$ is a smooth orbicurve (i.e., $C$ is a smooth curve), let
$\gamma_i$ define the action of the local group $G_{p_i}$ near $p_i$. The isomorphism $\varphi:\LL^r\rTo \omega_{\log}$ induces an isomorphism
\begin{equation}\label{eq:desing-bundle}
|\varphi|:|\LL|^{r}\rTo  \omega_{\cCb, \log}\otimes
\O\left(-\sum_{i=1}^n r\Theta^{\gamma_i}
p_i\right).
\end{equation}
\end{prop}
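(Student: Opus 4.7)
The question is local on $C$, so I would work on an \'etale neighborhood of each marked point. Away from the marked points the map $\varrho\colon\cC\to C$ is an isomorphism, the local group acts trivially, $|\LL|^{r}$ coincides with $|\omega_{\log}|=\omega_{C,\log}$, and there is nothing to show. The entire content of the proposition is the local behavior of the pushforward at each $p_i$.

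Near a marked point $p_i$ with local group $G_{p_i}\cong\Z/m_i$, I would fix an invariant uniformizer $z$ on $\cC$, set $u=z^{m_i}$ as the local coordinate on $C$, and trivialize $\LL$ locally so that the canonical generator of $G_{p_i}$ acts by $(z,s)\mapsto(\zeta z,\zeta^{v}s)$ with $\zeta=\exp(2\pi i/m_i)$. Since the induced representation factors through $\mu_r$ (the $A_{r-1}$-structure condition), $\gamma_i=\exp(2\pi iv/m_i)$ lies in $\mu_r$, so $v/m_i=\Theta^{\gamma_i}$ and $\ell_i:=r\Theta^{\gamma_i}=rv/m_i$ is an integer with $0\le\ell_i<r$. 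By the explicit description \eqref{eq:desing-triv} of the desingularization, a local generator $\tilde s$ of $|\LL|$ pulls back to the section $z^{v}s$ of $\LL$; hence $\tilde s^{\otimes r}$ pulls back to $z^{rv}s^{\otimes r}$.

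Now I would apply $\varphi$. The bundle $\omega_{\log}$ is locally generated by $dz/z = du/(m_i u)$, and since $\varphi\colon\LL^{r}\irightarrow\omega_{\log}$ is an isomorphism, $s^{\otimes r}$ is sent to a unit multiple of $dz/z$. Hence $z^{rv}s^{\otimes r}$ is sent to a unit multiple of
\[
z^{rv}\,\frac{dz}{z} \;=\; u^{\ell_i}\,\frac{du}{m_i u} \;=\; \frac{1}{m_i}\,u^{\ell_i-1}\,du,
\]
which is a nowhere-vanishing local section of $\omega_{C,\log}\otimes\O(-\ell_i p_i) = \omega_{C}\otimes\O((1-\ell_i)p_i)$. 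Since $\ell_i=r\Theta^{\gamma_i}$, glueing these local isomorphisms across $C$ with the tautological identification away from the marked points produces the global isomorphism \eqref{eq:desing-bundle}.

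The only real obstacle is bookkeeping: making sure that the conventions relating the local action exponent $v$, the element $\gamma_i$, and the rational number $\Theta^{\gamma_i}$ are consistent so that the integer twist on the coarse curve comes out to be exactly $r\Theta^{\gamma_i}$ rather than an equivalent value modulo $r$. Once the conventions are pinned down (using that $\rho_{p_i}$ factors through $\mu_r$ so that $rv$ is divisible by $m_i$), the verification reduces to the one-line local computation above.
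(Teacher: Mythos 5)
Your proposal is correct and follows essentially the same route as the paper: the paper's argument is exactly the local computation that an invariant section of $\LL$ near $p_i$ has the form $g(u)z^{v}s$, so its $r$th power maps under $\varphi$ to $z^{rv}g^r(u)\,dz/z = u^{\ell}g^r(u)\,du/(m u)$ with $\ell = rv/m = r\Theta^{\gamma_i}$, giving the twist $\O(-r\Theta^{\gamma_i}p_i)$ on the coarse curve. The bookkeeping point you flag (that $rv$ is divisible by $m$ because $\rho_{p_i}$ factors through $\mu_r$, pinning down $\ell_i$ in $\{0,\dots,r-1\}$) is precisely the observation the paper makes before stating the proposition.
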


\subsubsection{The isomorphism between $\W_{g,n}$ and $\MM^{1/r}_{g,n}$}

On the smooth locus, the isomorphism between $\W_{g,n}$ and $\MM^{1/r}_{g,n}$ is a straightforward consequence of Proposition~\ref{prop:pushforward-W}.
Specifically, for any smooth $A_{r-1}$-curve $(\cC,\LL,\varphi)$ in $\W_{g,n}(\bga)$, the pushforward along $\varrho:\cC\rTo C$ to the underlying coarse curve induces a line bundle $|\LL|$ on $C$ and an isomorphism 
$$|\varphi|:|\LL|^r \rTo \omega_{\log}\left(\sum_{i=1}^n  -r\Theta^{\ga_i} p_i\right) \cong \omega\left(\sum_{i=1}^n  (1-r\Theta^{\ga_i}) p_i\right).$$
This describes the isomorphisms
$$\W_{g,n}(\bga) \rTo \MM_{g,n,log}^{1/r, (r\Theta^{\ga_1}, \dots,r\Theta^{\ga_n})} \rTo \MM_{g,n}^{1/r, (r\Theta^{\ga_1}, \dots,r\Theta^{\ga_n})-\bone}$$ 
on the smooth locus.

The details of the isomorphism on  nodal curves are more messy.  These details are not essential for our purposes here, but the interested reader may find them in \cite[\S4]{AJ}.

\section{The state spaces}

\subsection{The  $A_{r-1}$ state space of \cite{FJR}}

We now describe the state space $\ch:=\ch_{A_{r-1},\langle J \rangle}$ of the theory of \cite{FJR} for the singularity $A_{r-1}$ with symmetry group $\mu_r=\langle J \rangle$,  where $J = \exp(2 \pi i/r)$.  

\begin{df}
For each $\ga\in G\subset (\C^*)^N$ we define $\fix(\ga)\subseteq \C^N$ to the be fixed locus of $\ga$ acting on $C^N$. We also define $N_\ga$ to be the dimension of $\fix(\ga)$ (as a $\C$-vector space).
\end{df}

As described in \cite{FJR}, the state space is a direct sum of the $\langle J\rangle$-invariant part of certain middle-dimensional relative cohomology groups:
$$\ch_W = \bigoplus_{\ga\in G} \ch_{\ga} :=\bigoplus_{k=0}^{r-1} \ch_{J^k} = \bigoplus_{k=0}^{r-1} \left(H^{{N_{\ga}}}(\fix({J^{k}}),W^{\infty}_{J^{k}}, \Q)\right)^{\langle J \rangle}.$$
In the case that $k\neq 0$, the fixed locus of $J^k$ is $\{0\} \subset \Q$ so the middle cohomology for these elements is just $$H^{{N_{\ga}}}(\fix({J^{k}}),W^{\infty}_{J^{k}}, \Q) = H^{0}(\{0\},W^{\infty}_{J^{k}}, \Q)  \cong \Q,$$ and the $J$-action on $H^{0}(\{0\},W^{\infty}_{J^{k}}, \Q)$ is trivial. 

On the other hand, when $k=0$ we have $H^{{N_{\ga}}}(\fix({J^{0}}),W^{\infty}_{J^{k}}, \Q) = H^{1}(\Q,W^{\infty}_{J^0}, \Q)$, and a theorem of Wall \cite{Wall} shows that this is isomorphic, as a graded $G$-module, to the following space of germs of one-forms on $\C$:  
\begin{equation*}
H^{1}(\fix({J^{0}}),W^{\infty}_{J^{0}}, \Q) \cong \Omega^{1}/(x^{r-1}dx).
\end{equation*}
This space has the following simple basis:
$$
\{ dx, xdx, \dots, x^{r-2}dx\}.
$$

None of the elements in the $J^0$ sector are $\langle J \rangle$-invariant, and all the elements in the $J^{k}$ sectors are invariant when $k\neq0$.
Thus we have
$$\ch_{J^k} = \C\be_{J^k} \dsand 
\ch= \bigoplus_{k=1}^{r-1} \C\be_{J^{k}} ,$$ where $\be_{J^{k}} :=1 \in H^{N_{J^{k}}}(\C^N_{J^{-i}},    W^{\infty}_{J^{-i}}, \Q).$

In the notation of the previous section, the group element $J^k$ has $\Theta^{J^k} = k/r$.  Therefore the $J^k$-sectors correspond to marked points $p$ where the
$A_{r-1}$-structure gives $|\LL|^r \cong \omega_{\log} (-kp)$ near
$p$.  

When $k\neq 0$, we call these sectors \emph{narrow}.  These will correspond to the \emph{Neveu-Schwarz} sectors of $r$-spin theory.

For any non-degenerate $W$ and any admissible $G$, the state space
$\ch_{W,G}$ admits a
grading and a non-degenerate
pairing $\langle\,\rangle^W$.  The pairing in the $A_{r-1}$ case is given by 
$$\langle \be_{J^k},\be_{J^\ell}\rangle = \delta_{k+\ell \equiv 0 \kern-.75em\pmod r}.$$ 
The grading is more subtle, as we now describe.

\begin{df}The \emph{central
charge} of the singularity $W$ is denoted $\chat^{FJRW}$:
\begin{equation}
\chat^{FJRW}:=\sum_{i=1}^N
(1-2q_i).
\end{equation}
Suppose that $\gamma=(e^{2\pi
    i \Theta^\gamma_1}, \dots, e^{2\pi i\Theta^\gamma_{N}})$ for rational numbers
    $0\leq \Theta^\gamma_i<1$.
We define the \emph{degree shifting number}
\begin{equation}
 \iota_{\gamma}=\sum_i (\Theta^{\gamma}_i
 -q_i) \label{eq:degshift-thetaq}
 \end{equation}
 For a class $\alpha \in \ch_{\gamma}$, we define
\begin{equation}
\deg_W(\alpha)= \frac{N_\ga}{2}+ \iota_{\gamma}.
\end{equation}
      \end{df}
Specifically, in the $A_{r-1}$-case, we have 
$$\chat = \frac{r-2}{r},$$
and
$$\deg_{A_{r-1}}(\be_{J^k}) = \frac{k-1}{r},$$
provided we choose $0< k < r$.

%
%
%
%
%
%

\subsection{The $r$-spin state space of \cite{JKV}}

The $r$-spin state space of \cite{JKV} is a $\C$-vector space 
$\chr$ of dimension $r-1$ with a
basis  $\{\r_0,\ldots, \r_{r-2}\}$ and metric $\etar$
given by
\begin{equation}
\etar(\r_\mu,\r_\nu):=\etar_{\mu\nu}:=
\delta_{\mu+\nu,r-2}.
\end{equation}
Furthermore, in \cite{JKV} the elements $\r_\mu$ are given a grading of 
$$\deg_{JVK}(\r_\mu) = \mu/r,$$ and the central charge of the theory is 
$$\chat^{JKV} = \frac{r-2}{r}.$$

\begin{rem}
In \cite{JKV,Wi} the span of each element $\r_\mu$ in $\chr$ is called a \emph{Neveu-Schwarz sector}.  There is an additional one-dimensional \emph{Ramond sector} that appears in the $r$-spin theory, but this sector completely decouples from the rest of the theory and so can be omitted (see \cite[Rem 3.10]{JKV} for more about this sector).  This Ramond sector can be thought of as roughly corresponding to the case of $\ga = J^0$ in the $A_{r-1}$ theory.  
\end{rem}

\subsection{The State Space Isomorphism} 

The two state spaces are isomorphic as graded vector spaces with metric.  
As suggested by the grading, the isomorphism matches the element $\be_{J^k} \in \ch_{J^k}$ to the element $\r_{k-1} \in \chr$.  It is straightforward to see that this is indeed an isomorphism preserving the grading, that $\deg_{FJRW}(\be_{J^k}) = (k-1)/r = \deg_{JKV} \r_{k-1}$, and that $\chat_{FJRW}  = \chat_{JKV} = (r-2)/r$.  From now on we will drop the subscripts and just write $\deg$ to denote both $\deg_{A_{r-1}}$ and $\deg_{JKV}$, write $\eta$ or $\langle\, , \rangle$ to denote the pairing, and write $\chat$ to denote $\chat_{FJRW} = \chat_{JKV} = (r-2)/r$.

\section{The virtual classes}
In this section we show how to construct an $r$-spin virtual cohomology class using the virtual class of the $A_{r-1}$ theory.  We begin by reviewing the axioms of the $r$-spin virtual cohomology class and the properties of the $A_{r-1}$ virtual cycle.

\subsection{The \cite{JKV} axioms of an $r$-spin virtual class}
The $r$-spin theory depends on the existence of a \emph{virtual cohomology class} $\cv$ satisfying a list of axioms outlined in \cite[\S4.1]{JKV}.  We briefly review those axioms here.

\begin{df}
An \emph{$r$-spin virtual class} is an assignment of a cohomology
class
\begin{equation}
\label{eq:cvirt} \cv_\Gamma \in H^{2 D} \left(\MM^{1/r}_\Gamma, \Q\right)
\end{equation}
 to every genus $g$, stable, decorated graph $\Gamma$ with
 $n$-tails.
Here, if the tails of $\Gamma$ are marked with the $n$-tuple
$\bm=(m_1,\ldots,m_n)$, then the dimension $D$ is
\begin{equation}
\label{eq:deg} D = \chat(g-\alpha)+\sum_{i=1}^n
m_i/r  = \chat(g-\alpha)+\sum_{i=1}^n \deg(\r_{m_i}),
\end{equation}
and $\alpha$ is the number of connected components of $\Gamma$. In
the special case where $\Gamma$ has one vertex and no edges, we
denote $\cv_\Gamma$ by $\cv_{g,n}(\bm)$.   These classes must
satisfy the axioms below.
\begin{description}
\item[\bf Axiom 1a (Connected Graphs)]
Let $\Gamma$ be a connected, genus $g$, stable, decorated graph
with $n$ tails. Let $E(\Gamma)$ denote the set of edges of
$\Gamma$. For each edge $e$ of $\Gamma$, let
$l_e\,:=\,\gcd(m_e^++1,r)$, where $m_e^+$ is an integer decorating
a half-edge of $e$. The classes $\cv_{\Gamma}$ and
$\cv_{g,n}(\bm)$ are related by
\begin{equation}
\label{eq:cvgamma} \cv_{\Gamma}= \left(
\prod_{e\,\in\,E(\Gamma)}\,\frac{r}{l_e}\right) \, \tilde{i}^*
\,\cv_{g,n}(\bm) \in H^{2 D} (\MM^{1/r}_{\Gamma}),
\end{equation}
where $\tilde{i}\, :\,\M_\Gamma^{1/r}\,\rInto\M_{g,n}^{1/r,\bm}$ is the
canonical
inclusion map.
\item[\bf Axiom 1b (Disconnected Graphs)]
Let $\Gamma$ be a stable, decorated graph which is the disjoint
union of connected graphs $\Gamma^{(d)}$, then the classes
$\cv_{\Gamma}$ and $\cv_{\Gamma^{(d)}}$ are related by
\[
\cv_\Gamma\,=\,\bigotimes_d\,\cv_{\Gamma^{(d)}}\,\in\,H^\bullet(\M^{1/r}_\Gamma).
\]
\smallskip

\item[\bf Axiom 2  (Convexity)]
If $m_i \neq r-1$ for all $i\in \{1,\dots,n\}$, let $\cF$ denote the universal $r$th root rank-one torsion-free sheaf on the universal $r$-spin curve 
$\pi:\cc^{1/r,\bm}_{g,n}\rTo
\MM^{1/r,\bm}_{g,n}$. For each irreducible (and connected)
component of $\M^{1/r,\bm}_{g,n}$ (denoted here by
$\M^{1/r,\bm,(d)}_{g,n}$  for some index $d$), if
$\pi_*\cF_r=0$ on $\M^{1/r,\bm,(d)}_{g,n}$, then
$\cv_{g,n}(\bm)$ restricted to
$\M^{1/r,\bm,(d)}_{g,n}$ is the top Chern class $(-1)^Dc_D (R^1\pi_*\cF_r)$,
of the dual of the first derived pushforward of $\cF$. 
\smallskip

\item[\bf Axiom 3 (Cutting edges)]
Given any genus-$g$, decorated stable graph $\Gamma$ with $n$ tails
marked with $\bm$, we have a diagram
\begin{equation}
 \begin{diagram} \label{eq:Cutting}
 & &
 \M_{\tilde{\Gamma}}
\times_{\M_{\Gamma}}
 \M^{1/r}_{\Gamma} & \rTo^{\tilde{\mu}} &
 \M^{1/r}_{\Gamma} &
 \rInto^{\tilde{i}} & \M^{1/r}_{g,n} \\
 & \ldTo_{p_1} \\
 \M^{1/r}_{\tilde{\Gamma}} & & \dTo^p & & \dTo^p & &
 \dTo^p \\
 & \rdTo_{p_2}\\
  & & \M_{\tilde{\Gamma}} & \rTo^{\mu} &
 \M_{\Gamma} &
 \rInto^i & \M_{g,n}.
 \end{diagram}
\end{equation}
where $\M_{\tilde{\Gamma}}$ is the stack of stable
 curves with
 graph $\tilde{\Gamma}$, the graph obtained by cutting all edges of
 $\Gamma$, and $\M^{1/r}_{\tilde{\Gamma}}$ is the stack of
 stable $r$-spin curves with graph $\tilde{\Gamma}$ (still marked
 with $m^{\pm}$ on each half edge).
 $p_1$ is the following morphism:
 The fiber product consists of triples of an $r$-spin curve $(X/T,
 \cF,\varphi)$, a stable curve $\tilde{X}/T$, and a
 morphism $\nu : \tilde{X} \rTo X$, making $\tilde{X}$ into the
 normalization of  $X$.  Also, the dual graphs of $X$ and $\tilde{X}$
 are $\Gamma$   and $\tilde{\Gamma}$, respectively.  The associated $r$-spin
 curve in $\M^{1/r}_{\tilde{\Gamma}}$ is simply
 $(\tilde{X}/T,   \nu^*\cF, \nu^*\varphi)$.
We require that
\[
p_{1*} \tilde{\mu}^* \cv_{\Gamma} =  r^{|E(\Gamma)|}
\cv_{\tilde{\Gamma}} ,\] where $E(\Gamma)$ is the set of edges
of $\Gamma$ that are cut in
 $\tilde{\Gamma}$.
\item[\bf Axiom 4 (Ramond Vanishing)]
\label{vanish}
  If $\Gamma$ contains a tail marked with $m_i=r-1$ or $m_i=-1$, then
$\cv_{\Gamma}=0.$
\item[\bf Axiom 5 (Forgetting tails)]
\label{forget} Let $\widehat{\Gamma}$ be a stable graph whose
$i$-th tail is marked by $m_i=0$, $\Gamma$ be the stable graph
obtained by removing the $i$-th tail, and
$$\pi:\MM^{1/r}_{\widehat{\Gamma}} \rTo\MM^{1/r}_{\Gamma} $$
be the forgetful morphism.  The classes $\cv_{\widehat{\Gamma}}$
and $\pi^*\cv_{\Gamma}$ are related by
$$\cv_{\widehat{\Gamma}}=\pi^*\cv_{\Gamma}.$$
\end{description}
\end{df}

\subsection{The properties satisfied by the \cite{FJR} virtual cycle for $A_{r-1}$}

Let $\Gamma$ be a stable graph (not necessarily
connected) with tails $T(\Ga)$, and with each tail $\tau\in T(\Ga)$ decorated by an element
$\gamma_\tau \in G$.  Denote by $n = |T(\Ga)|$ the number of
tails of $\Gamma$.

The theory of \cite{FJR} provides a homology \emph{virtual cycle} 
\begin{align*}\left[\W(\Gamma)\right]^{vir}  \in &
H_*(\W(\Gamma),\Q)\otimes \prod_{\tau \in T(\Gamma)}
\left(H_{{N_{\ga}}}(\fix({J^{k}}),W^{\infty}_{J^{k}}, \Q)\right)^{\langle J \rangle}\\
& = H_*(\W(\Gamma),\Q)\otimes \prod_{\tau \in T(\Gamma)}\ch^*_{\ga_{\tau}},
\end{align*}
where $\ch^*_{\ga_\tau}$ is the dual of $\ch_{\ga_\tau}$.  And the virtual cycle satisfies several axioms similar to those of the virtual cohomology class of $r$-spin theory.  Here we briefly review those properties of the cycle that are relevant to this paper.

When $\Gamma$ has a single vertex of genus $g$, $n$ tails, and no edges
(i.e, $\Gamma$ is a corolla), we denote the virtual cycle by
$\left[\W(\bgamma)\right]^{vir}$, where $\bgamma :=(\gamma_1,
\dots, \gamma_n)$.

The following properties axioms hold for the virtual cycle $\left[\W(\Gamma)\right]^{vir}$:
\begin{enumerate}
\item \textbf{Dimension:}\label{ax:dimension} As in the $r$-spin case, define $$D=\chat(\ga-\alpha) + \sum_{\tau\in T(\Ga)} \deg (\be_{\ga_\tau}),$$ where $\alpha$ is the number of connected components of $\Ga$.
If $D$ is not a
  half-integer\footnote{Note that in the $A_{r-1}$ case, the selection rule Equation~(\ref{eq:deg-sel-rule}) shows that the stack is empty unless $D$ is actually an integer.
} (i.e., if $D \not\in\frac{1}{2}\Z$), then
  $\left[\W(\Gamma)\right]^{vir}=0$. Otherwise, the cycle
  $\left[\W(\Gamma)\right]^{vir}$ has degree
\begin{equation}\label{eq:dimension}
2\left((\chat-3)(1-g) + n - \sum_{\tau\in
T(\Gamma)} \iota_{\tau}\right).
\end{equation}
So the cycle lies in $H_d(\W(\Gamma),\Q)\otimes \prod_{\tau \in
  T(\Gamma)}
\ch^*_{\ga_{\tau}},$
where
$$
d:=6g-6+2n -2D  = 2\left((\hat{c}-3)(1-g)+n-\sum_{\tau\in T(\Gamma)}\deg(\be_{\ga_\tau})\right).
$$

\item \label{ax:symm}\textbf{Symmetric group invariance}: There is a
  natural $S_n$-action on $\W_{g,n}$ obtained by permuting the
  tails.  This action induces an action on homology.  That is, for any
  $\sigma \in S_n$ we have:
$$\sigma_*: H_*(\W_{g,n},\Q)\otimes \prod_i
\ch^*_{\ga_i} 
\rTo{}{}  H_*(\W_{g,k},\Q)\otimes \prod_i
\ch^*_{\ga_i} 
.$$ For any decorated graph
$\Gamma$, let $\sigma\Gamma$ denote the graph obtained by applying
$\sigma$ to the  tails of $\Gamma$.

We have
\begin{equation}\sigma_*\left[\W(\Gamma)\right]^{vir} = \left[\W(\sigma\Gamma)\right]^{vir}.\end{equation}

 \item \textbf{Degenerating connected graphs:} \label{ax:ConnGraphs} Let $\Gamma$ be a
   connected, genus-$g$, stable graph decorated with $\ga_i$ on the $i$th tail.

The cycles $\left[\W(\Gamma)\right]^{vir}$ and
$\left[\W_{g,n}(\bgamma)\right]^{vir}$ are related by
\begin{equation}
 \left[\W(\Gamma)\right]^{vir}=
\tilde{i}^*\left[\W_{g,n}(\bgamma)\right]^{vir},
\end{equation}
where $\tilde{i} : \W(\Gamma) \rTo{}{} \W_{g,n}(\bgamma)$ is the
canonical inclusion map.
\item \textbf{Disconnected graphs:} Let
$\Gamma =\coprod_{i} \Gamma_i$ be a stable, decorated $W$-graph
which is the disjoint union of connected $W$-graphs $\Gamma_i$.
The classes $\left[\W(\Gamma)\right]^{vir}$ and
$\left[\W(\Gamma_i)\right]^{vir}$ are related by
\begin{equation}
\left[\W(\Gamma)\right]^{vir}=
\left[\W(\Gamma_1)\right]^{vir} \times \cdots \times
\left[\W(\Gamma_d)\right]^{vir}.
\end{equation}
\smallskip

\item{\bf Concavity}:\label{ax:convex}
 Suppose that all the decorations on tails of $\Gamma$ are
  \emph{narrow}, meaning that $\fix(\ga) = \C^{N_{\gamma_i}}=\{0\}$, and so we can
  omit $\ch^*_{\ga_i} = H_{N_{\gamma_{i}}}(\C^{N_{\gamma_i}}, W^{\infty}_{\gamma_i}, \Q)\cong\Q$ from our notation.

Consider the universal $A_{r-1}$-structure  bundle $\LL$ on the
universal curve $\pi:\cC \rTo{}{} \W(\Gamma)$. 

If
$\pi_*(\LL)=0$, then the virtual
cycle is given by capping the top Chern class of the dual $\left(R^1 \pi_* (\LL)\right)^*$ of the pushforward
with the usual fundamental cycle of the moduli space:
\begin{equation}\begin{split}
\left[\W(\Gamma)\right]^{vir}& = c_{top}\left((R^1\pi_*\LL)^*\right) \cap \left[\W(\Gamma)\right]\\
& = (-1)^D c_{D}\left(R^1\pi_*\LL\right)
\cap \left[\W(\Gamma)\right].
\end{split}
\end{equation}

%

\item\textbf{Composition law:}\label{ax:cutting} Given any genus-$g$
  decorated stable $W$-graph $\Gamma$ with $k$ tails, and given any
  edge $e$ of $\Gamma$, let $\hGamma$ denote the graph obtained by
  ``cutting'' the edge $e$ and replacing it with two unjoined tails
  $\tau_+$ and $\tau_-$ decorated with $\gamma_+$ and $\gamma_-$,
  respectively.

The fiber product
$$F:=\W(\hGamma)\times_{\W(\Gamma)} \W(\Gamma)$$
has morphisms
 $$ \W({\hGamma})\lTo^{q} F
\rTo^{pr_2}\W(\Gamma).$$

We have
\begin{equation}\label{eq:cutting}
\left\langle \left[\W(\hGamma)\right]^{vir}\right\rangle_{\pm}=\frac{1}{\deg(q)}q_*pr_2^*\left(\left[\W(\Gamma)\right]^{vir}\right),
\end{equation}
where $\langle  \rangle_{\pm}$ is the map from
$$H_*(\W(\hGamma))\otimes\prod_{\tau \in T(\Gamma)} 
\ch^*_{\ga_\tau}
\otimes  
\ch^*_{\ga_{+}}
\otimes 
\ch^*_{\ga_{-}}
$$ to
$$ H_*(\W(\hGamma))\otimes\prod_{\tau \in T(\Gamma)} 
\ch^*_{\ga_\tau}
$$ obtained by contracting  the last two factors via the usual dual pairing
$$\langle\, , \rangle: \ch^*_{\ga_{+}}
\otimes 
\ch^*_{\ga_{-}}
\rTo \Q.$$
\item
\textbf{Forgetting tails:}\label{ax:tails}
\begin{enumerate}
\item
 Let $\Gamma$ have its $i$th tail decorated with $J$, where $J$
 is the exponential grading element of $G$. Further let $\Gamma'$ be
 the decorated $W$-graph obtained from $\Gamma$ by forgetting the
 $i$th tail and its decoration.  Assume that $\Gamma'$ is stable, and
 denote the forgetting tails morphism by $$\vartheta: \W(\Gamma)
 \rTo{}{} \W(\Gamma').$$ We have
\begin{equation}
\left[\W(\Gamma)\right]^{vir}
=\vartheta^*\left[\W(\Gamma')\right]^{vir}.
\end{equation}
    \item In the case of $g=0$ and $k=3$, the space
      $\W(\gamma_1, \gamma_2, J)$ is empty if
      $\gamma_1\gamma_2\neq 1$, and otherwise $\W_{0,3}(\gamma, \gamma^{-1},
      J)=\BG$.  We omit
      $\ch^*_{J} = 
      \Q$
      from the notation.  In this case, the cycle
            $$\left[\W_{0,3}(\gamma, \gamma^{-1},
        J)\right]^{vir}\in H_*(\BG,\Q)\otimes
\ch^*_\ga 
\otimes
\ch^*_{\ga^{-1}} 
$$ is the fundamental cycle of $\BG$ times the Casimir
      element. Here the Casimir element is defined as follows. Choose
      a basis $\{\alpha_i\}$ of
$\ch^*_\ga 
,$ and a
      basis $\{ \beta_j\}$ of
$\ch^*_{\ga^{-1}}
$. Let $\eta_{ij}=\langle \alpha_i, \beta_j\rangle $ and
      $(\eta^{ij})$ be the inverse matrix of $(\eta_{ij})$. The
      Casimir element is defined as $\sum_{ij}\alpha_i\eta^{ij}\otimes
      \beta_j.$
\end{enumerate}

\end{enumerate}

The virtual cycle satisfies several other properties as well, but those additional properties are not needed for this paper.

\subsection{The $A_{r-1}$ virtual cycle defines an $r$-spin class}
As noted above, the stack of $r$-spin curves and the stack of $A_{r-1}$-curves are isomorphic as stacks, and the state spaces of the two theories are isomorphic.  All that remains is to use the $A_{r-1}$ virtual cycle to construct a cohomology class which satisfies the axioms of an $r$-spin virtual class.

\subsubsection{$r$-spin class from the $A_{r-1}$ virtual cycle}

To complete the connection to the $r$-spin theory of \cite{JKV}, we
define a cohomology class $c_{g,k}^{1/r}(\bm)$ as follows.
\begin{df}\label{df:Gammatilde}
Given a stable graph $\Gamma$ with tails (indexed by $i\in
\{1,\dots,n\}$) decorated by integers $m_i \in \{0,\dots,r-1\}$ and
half-edges (indexed by $e_+$ and $e_-$ for $e\in E(\Gamma)$) decorated
by integers $m_{e_+}$ and $m_{e_-}$ in $\{0,\dots,r-1\}$, such that
for any edge the two decorations $m_{e_+}$ and $m_{e_-}$ on the half
edges $e_+$ and $e_-$, respectively, satisfy the relation
$m_{e_+}+m_{e_-} \equiv r-2 \pmod r$, we let $ \tGamma$ be the stable
decorated $W$-graph whose tails are decorated with the group
elements $\gamma_i := J^{m_i+1}$ and whose half-edges are decorated
by the group elements $\gamma_{e_+} := J^{m_{e_+}+1}$ and
$\gamma_{e_-} := J^{m_{e_-}+1}$, respectively.  

We define $\cv_{\Gamma}$ to be the cohomology class
\begin{equation}
\cv_{\Gamma}:= \begin{cases} 
0 \text{ if $m_i \equiv -1 \pmod r$ for any $i\in \{1,\dots,n\}$ or if $D$ is not in $\Z$.}\\
\prod_{e\in E(\Gamma)} |\lgr{e_+}| PD\, \left(\left[\MM_{A_{r-1}}(\tGamma) \right]^{vir}\cap\prod_{i=1}^k \be_{\gamma_i}\right) \text{ otherwise.}\\
\end{cases}
\end{equation}
Here $PD$ denotes the Poincar\'e dual, and $\cap$ denotes the obvious contraction 
$$\cap:\left(H_*(\W(\Gamma),\Q)\otimes \prod_{\tau \in T(\Gamma)}\ch^*_{\ga_{\tau}} \right)\otimes \left(\prod_{\tau \in T(\Gamma)}\ch_{\ga_{\tau}}\right) \rTo H_*(\W(\Gamma),\Q).$$

We will write $\cv_{g,k}(\bm):=\cv_{\Gamma}$ when $\Gamma$ is a genus-$g$ corolla with $k$ tails labeled by $\bm = (m_1,\dots,m_n)$.
\end{df}

\subsubsection{Verification of the $r$-spin axioms}

We will continue to use the notation of Definition~\ref{df:Gammatilde}, and we will write 
\begin{equation}
\MM^{1/r}_{\Gamma}:=\MM_{A_{r-1}}(\tGamma). \glossary{MMr @$\MM^{1/r}_{\Gamma}$ & The stack of $r$-spin curves $\MM_{A_{r-1}}(\tGamma)$}
\end{equation}
This is legitimate, since as discussed earlier, the stack $\MM^{1/r}_{\Gamma}$ of stable $r$-spin curves with graph $\Gamma$  is (canonically) isomorphic to the stack $\MM_{A_{r-1}}(\tGamma)$.

\begin{prop}
The collection of classes $\cv_{\Gamma}$ satisfies all the axioms of an $r$-spin virtual class outlined in \cite{JKV}.
\end{prop}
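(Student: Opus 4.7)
The plan is to verify each of the five \cite{JKV} axioms in turn by translating the statement into one about the FJRW virtual cycle via the stack isomorphism $\MM^{1/r}_{g,n}\cong\W_{g,n}$ of Section~2 together with the state-space identification $\r_m\leftrightarrow\be_{J^{m+1}}$ of Section~3. The cohomological degree assertion for $\cv_\Gamma \in H^{2D}$ follows directly from the FJRW Dimension property, since Poincar\'e duality on $\W(\Gamma)$ converts the homological degree $d=6g-6+2n-2D$ into cohomological degree $2D$.

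Axiom~4 (Ramond Vanishing) is built into the definition of $\cv_\Gamma$ and is therefore immediate. Axiom~5 (Forgetting Tails) reduces to the FJRW forgetting-tails property, because a tail marked $m=0$ corresponds under our identification to $\ga=J^{0+1}=J$, which is exactly the exponential grading element, so the hypotheses of the FJRW axiom apply verbatim. Axiom~1b (Disconnected Graphs) follows from the FJRW disconnected-graph property once one observes that Poincar\'e duality turns a Cartesian product of virtual cycles into a tensor product of classes. For Axiom~1a (Connected Graphs) I would apply the FJRW ``Degenerating connected graphs'' property, which gives $\tilde{i}^*[\W(\bgamma)]^{vir}=[\W(\tGamma)]^{vir}$; the prefactor $\prod_e r/l_e$ appearing in~(\ref{eq:cvgamma}) then matches the factor $\prod_e|\lgr{e_+}|$ in the definition of $\cv_\Gamma$, since $\lgr{e_+}=J^{m_{e_+}+1}$ has order $r/\gcd(m_{e_+}+1,r)=r/l_e$ in $\mu_r$. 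Axiom~2 (Convexity) follows from the FJRW Concavity axiom: when all $m_i\neq r-1$, every tail is narrow, and by Proposition~\ref{prop:pushforward-W} pushforward to the coarse curve identifies the $A_{r-1}$-line bundle $\LL$ with the $r$-spin sheaf $\cF$, so that $\pi_*\LL=0$ corresponds to $\pi_*\cF=0$ and the two top-Chern-class formulas coincide.

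The main work lies in Axiom~3 (Cutting Edges), which I would derive from the FJRW Composition Law applied edge by edge. The plan is to rewrite $p_{1*}\tilde\mu^*\cv_\Gamma$ as a push-pull on the fiber product of moduli stacks, invoke the Composition Law for each edge of $\Gamma$, and then contract the Casimir on the two newly-opened tails. Because Ramond nodes have already been killed by Axiom~4, every contributing edge is Neveu--Schwarz, so each $\ch_{\ga_{e_+}}$ and $\ch_{\ga_{e_-}}$ is one-dimensional with pairing $\langle\be_{\ga_{e_+}},\be_{\ga_{e_-}}\rangle=1$, and the Casimir reduces to $\be_{\ga_{e_+}}\otimes\be_{\ga_{e_-}}$. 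I expect the main obstacle to be combinatorial: the prefactor $\prod|\lgr{e_+}|=\prod r/l_e$ built into $\cv_\Gamma$, the degree of the comparison morphism $q$ between the FJRW and $r$-spin versions of the boundary, and the multiplicities introduced by the gerbe structure at the nodes must together produce exactly the $r^{|E(\Gamma)|}$ required by Axiom~3. This bookkeeping of combinatorial factors, rather than any conceptual difficulty, is where I anticipate the most care will be required.
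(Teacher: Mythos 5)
Your proposal follows essentially the same route as the paper: every axiom except Cutting Edges is read off directly from the corresponding FJRW property via the stack and state-space identifications, and the only real content is the factor count for Axiom 3. The bookkeeping you flag as remaining closes exactly as you anticipate: per cut edge the prefactor $|\lgr{e_+}|$ built into the definition of $\cv_\Gamma$ multiplies the degree $|G/\lgr{e_+}|$ of $q$ coming from the FJRW Composition Law to give $|G|=r$, yielding $r^{|E(\Gamma)|}$ overall (with the Ramond case handled, as you note, by the vanishing of the invariant part of the broad sector).
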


\begin{proof}
It is clear from the definition that the class $\cv$ lies in $H^{2D}(\MM^{1/r}_{\Ga})$. And, indeed, the only axiom that does not immediately follow from the definition and the corresponding axioms for $A_{r-1}$-curves is the cutting edges axiom.

The axiom requires in Diagram~(\ref{eq:Cutting}) that
\[
q_* pr_2^* \cv_{\Gamma} =  r^{|E(\Gamma)|}
\cv_{ \widehat{\Gamma}} ,\] where $E(\Gamma)$ is the set of edges
of $\Gamma$ that are cut in
 $ \widehat{\Gamma}$.
This follows by induction on the number of edges from the corresponding axiom for the virtual cycle.  For a single edge labeled by $\gamma$, the degree of the map $q$ in that axiom is $|G/\lgr{}|$ and the class $\cv_{\Gamma}$ is $|\lgr{}|$ times the Poincar\'e dual of the virtual cycle, so the overall factor introduced is $|G|^{|E(\Gamma)|} = r^{|E(\Gamma)|}$, as desired.

The only complication is the fact that we have defined the class $\cv$ to be zero if any tail is marked with $r-1$ or $-1$ (corresponding to $\ga = J^0$).  But in the $A_{r-1}$ case the only invariant element in the broad/Ramond ($J^0$-) sector is $0$, so any time a cut graph $\hGamma$ introduces a new tail decorated with $m = r-1$ (or $-1$), the corresponding class will vanish, as required.
\end{proof}

\section{Conclusion}

The cohomological field theory arising from the $A_{r-1}$ theory is given by 
\begin{align*}
\Lambda_{g,n}^{A_{r-1}} (\be_{J^{k_1}},\dots,\be_{J^{k_n}}) :&=\frac{|G|^g}{\deg(st)} PD(st_*([\W_{g,n}]^{vir} \cap \be_{J^{k_1}}\otimes\cdots\otimes\be_{J^{k_n}}))\\
&=\frac{r^g}{r^{2g-1}} PD(st_*([\W_{g,n}]^{vir} \cap  \be_{J^{k_1}}\otimes\cdots\otimes\be_{J^{k_n}}))\\
&=\frac{1}{r^{g-1}} \st_*\cv(k_1-1,\dots,k_n-1)
\end{align*}
And this is precisely the $r$-spin cohomological field theory $\Lambda^{1/r}_{g,n}(\r_{k_1-1}, \dots, \r_{k_n-1})$ as defined in \cite{JKV}, so the two theories are identical.

Faber, Shadrin, and Zvonkine \cite{FSZ} have proved that any $r$-spin cohomological field theory arising from an $r$-spin virtual class is completely determined by the $g=0$ theory, and in \cite{JKV} it is proved that the $g=0$ $r$-spin theory is completely determined by the axioms.  Therefore the proof in \cite{FSZ} that the $r$-spin theory satisfies the Witten Integrable Hierarchies Conjecture also applies to the $A_{r-1}$ theory, as expected.

\bibliographystyle{amsplain}

\providecommand{\bysame}{\leavevmode\hbox
to3em{\hrulefill}\thinspace}

\end{document}